\documentclass[11pt]{amsart}

\usepackage[T1]{fontenc}
\usepackage[utf8]{inputenc}
\usepackage{lmodern}
\usepackage{amsmath,amssymb,mathtools,mathrsfs}
\usepackage{enumitem}
\usepackage{xcolor}
\usepackage{graphicx}
\usepackage[colorlinks=true,linkcolor=blue!55!black,citecolor=blue!55!black,urlcolor=blue!55!black]{hyperref}
\usepackage[margin=2.5cm]{geometry}

\numberwithin{equation}{section}

\newtheorem{theorem}{Theorem}

\newtheorem{lemma}[theorem]{Lemma}
\newtheorem{corollary}[theorem]{Corollary}
\theoremstyle{definition}

\newtheorem*{definition*}{Definition}
\theoremstyle{remark}
\newtheorem{remark}[theorem]{Remark}

\usepackage[nameinlink,noabbrev]{cleveref}
\crefname{proposition}{proposition}{propositions}
\Crefname{proposition}{Proposition}{Propositions}
\crefname{lemma}{lemma}{lemmas}
\Crefname{lemma}{Lemma}{Lemmas}
\crefname{corollary}{corollary}{corollaries}
\Crefname{corollary}{Corollary}{Corollaries}
\crefname{definition}{definition}{definitions}
\Crefname{definition}{Definition}{Definitions}
\crefname{remark}{remark}{remarks}
\Crefname{remark}{Remark}{Remarks}

\newcommand{\R}{\mathbb R}
\newcommand{\T}{\mathbb T}
\newcommand{\eps}{\varepsilon}

\newcommand{\Id}{\mathrm{Id}}
\newcommand{\Int}{\operatorname{int}}
\newcommand{\dist}{\operatorname{dist}}

\newcommand{\Div}{\operatorname{div}}

\newcommand{\vol}{\operatorname{vol}}
\newcommand{\norm}[1]{\left\lVert #1\right\rVert}
\newcommand{\abs}[1]{\left\lvert #1\right\rvert}

\title[Stability and instability in magnetic fields]{Stability and instability
for particle dynamics\\ in magnetic fields}
\author{Alberto Enciso}
\author{Daniel Peralta-Salas}
\date{}

\begin{document}

\begin{abstract}
We study zero-speed equilibria of the Newton--Lorentz equation $\ddot x=\dot x\times B(x)$ in $\R^3$, which describes the motion of a charged particle in a magnetic field $B$. We prove Lyapunov instability wherever the field $B$ does not vanish (under $C^1$ regularity), and at every isolated zero of $B$ provided it is real-analytic. These results do not assume that the field is divergence-free. In contrast, we construct a compactly supported divergence-free $C^\infty$ field that is flat at an isolated zero and makes the corresponding equilibrium Lyapunov stable.
\end{abstract}

\maketitle

\section{Introduction}
\label{sec:introduction}

Our objective in this paper is to study the Lyapunov stability of a charged
particle at rest in a three-dimensional magnetic field.  In order to state the mathematical formulation of this problem, let us introduce some notation. Let $U\subset\R^3$ be a
neighborhood of the origin and let $B:U\to\R^3$ be a $C^1$ vector field (we do not need to assume that it is divergence free). A unit-charge, unit-mass particle under the influence of the magnetic field $B$ obeys the Newton--Lorentz equation of motion
\begin{equation}
    \ddot x=\dot x\times B(x)\,,
    \label{eq:magnetic-equation}
\end{equation}
where, as usual, the symbol $\cdot$ denotes time derivative. This equation is solved for each initial condition $x_0\in U$ and $\dot x_0\in \mathbb R^3$, which yields a solution $x(t;x_0,\dot x_0)$. All along this article we will usually omit the dependence of the solution with the initial condition, and will simply write $x(t)$. We recall that the phase space is parametrized by coordinates $(x,\dot x)\in \mathbb R^6$.

Concerning the equilibria of this dynamical system, we observe that for every point $p\in U$, the phase-space point $(p,0)$ is an equilibrium of the Newton--Lorentz equation~\eqref{eq:magnetic-equation}. Accordingly, in phase space the set of equilibria consists of the three-dimensional set $U \times \{0\}$. By translation, we may take $p=0$ without any loss of generality, so in what follows we will focus on the (non-isolated) equilibrium point~$(0,0)$. We will see that it is convenient to distinguish between the case that $0$ is a \emph{magnetic zero} (i.e., $B(0)=0$) and the case that it is a regular point of $B$ (i.e., $B(0)\neq 0$).

We use the following formulation of Lyapunov (in)stability:

\begin{definition*}
The equilibrium $(0,0)$ is \emph{Lyapunov unstable} if there is $r>0$
such that, for every $\delta>0$, one can find initial conditions $(x_0,\dot x_0)$ satisfying
\[
    \abs{x_0}+\abs{\dot x_0}<\delta\,,
\]
and a time $T>0$ for which $\abs{x(T)}\ge r$.  Otherwise it is
\emph{Lyapunov stable}.
\end{definition*}

This is equivalent to the usual definition of Lyapunov (in)stability because the Lorentz force
does no work. Therefore, the energy of a solution $x(t)$ consists only of the kinetic part $\frac12|\dot x(t)|^2$, and the speed $|\dot x(t)|$ is a conserved quantity of the motion. Every solution then satisfies
\begin{equation}
    \abs{\dot x(t)}=\abs{\dot x(0)}.
    \label{eq:speed-conservation}
\end{equation}
Thus a phase-space escape solution $(x(t),\dot x(t))$ with initial speed $|\dot x_0|$ tending to zero, necessarily contains a spatial escape $|x(t)|$.

Our first result shows that an equilibrium is unstable whenever the magnetic
field does not vanish.  This holds under only $C^1$ regularity. The proof is elementary and is given in
Appendix~\ref{sec:nonzero-field}.

\begin{theorem}
\label{thm:away-from-magnetic-zeros}
Let $B\in C^1(U;\R^3)$.  If $B(0)\ne0$, then the zero-speed
equilibrium $(0,0)$ of \eqref{eq:magnetic-equation} is Lyapunov
unstable.
\end{theorem}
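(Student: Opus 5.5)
The idea is to show that a slow particle launched along the field line through the origin keeps drifting along $B$ for a time of order $1/\eps$, and so covers a distance bounded below independently of $\eps$.

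Since $B(0)\neq0$ and $B\in C^1$, after a rotation we may assume $B(0)=\abs{B(0)}e_3$. We also shrink to a closed ball $\overline{B_{r_0}}\subset U$ on which $\abs{B}\ge \abs{B(0)}/2$. There the unit field $b:=B/\abs{B}$ is $C^1$, so we can fix constants $C,L$ with $\norm{Db}\le C$ and $\abs{b(x)-e_3}\le L\abs{x}$ on this ball.

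The key observation is that the parallel velocity $\dot x\cdot b(x)$ is an approximate invariant. Because $(\dot x\times B)\cdot b=0$, we have
\[
\frac{d}{dt}\bigl(\dot x\cdot b(x)\bigr)=\ddot x\cdot b(x)+\dot x\cdot Db(x)\,\dot x=\dot x\cdot Db(x)\,\dot x .
\]
Hence $\bigl|\frac{d}{dt}(\dot x\cdot b(x))\bigr|\le C\abs{\dot x}^2=C\eps^2$, using the speed conservation \eqref{eq:speed-conservation} with $\abs{\dot x_0}=\eps$. We take initial data $x_0=0$ and $\dot x_0=\eps e_3$, so that $\dot x\cdot b=\eps$ at $t=0$. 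Then, as long as the solution stays in the ball, $\dot x\cdot b(x)\ge \eps/2$ for $0\le t\le T_\eps:=1/(2C\eps)$.

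Next we use $x_3$ as a height function. We fix $r\le r_0$ with $Lr\le 1/4$. While $\abs{x(t)}\le r$,
\[
\dot x_3=\dot x\cdot b(x)+\dot x\cdot\bigl(e_3-b(x)\bigr)\ge \frac{\eps}{2}-\eps L r\ge \frac{\eps}{4}.
\]
Suppose the solution stayed in $\{\abs{x}\le r\}$ for all $t\in[0,T_\eps]$. Then $x_3(T_\eps)\ge \eps T_\eps/4=1/(8C)$. If we also choose $r<1/(8C)$, this contradicts $\abs{x}\le r$. So for every $\eps>0$ there is a time $T\le T_\eps$ with $\abs{x(T)}\ge r$, while $\abs{x_0}+\abs{\dot x_0}=\eps$ is arbitrarily small. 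This is Lyapunov instability with the $\eps$-independent radius $r$.

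I do not expect a genuine obstacle. The only delicate point is the bookkeeping of the order of choices: $C$ and $L$ depend only on $B$ near $0$, then $r$ is fixed, and only afterwards is $\eps$ taken small. The escape distance $1/(8C)$ is independent of $\eps$ because the $O(\eps^2)$ drift of the parallel velocity acts only over a time $O(1/\eps)$. Local existence and continuation of solutions inside the ball are standard since $B\in C^1$. The a priori bounds above are used only up to the first exit time.
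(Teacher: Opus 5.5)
Your proof is correct and follows essentially the same route as the paper's. Both launch from $x_0=0$ with $\dot x_0=\eps B(0)/\abs{B(0)}$, use that the Lorentz force is orthogonal to $B$ so the parallel velocity drifts only at rate $O(\eps^2)$, and integrate the component along $B(0)$ over a time of order $1/\eps$; the only differences are cosmetic (the paper works with the unnormalized quantity $\dot x\cdot B(x)$ and uses the a priori bound $\abs{x(t)}\le\eps t$ to stay in the ball up to time $r/\eps$, whereas you normalize to $b=B/\abs{B}$ and argue by first exit).
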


We next focus on magnetic zeros, the only points where stability
can occur.  The problem is
more delicate there, and the regularity of $B$ becomes essential.
For real-analytic fields, we prove that an isolated magnetic zero
still gives an unstable zero-speed equilibrium.

\begin{theorem}
\label{thm:analytic-isolated}
Let $B$ be real-analytic near $0\in\R^3$, with $B(0)=0$, and suppose
that $0$ is an isolated magnetic zero.  Then $(0,0)$ is Lyapunov unstable.
No assumption on $\Div B$ (the divergence of the vector field $B$) is required.
\end{theorem}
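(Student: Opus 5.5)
The plan is to build, for every small speed $\eps>0$, a trajectory that starts $\delta$-close to the origin and leaves a fixed ball by running outward along a curve on which $B$ is (almost) tangent to the direction of motion, so that the Lorentz force $\dot x\times B$ nearly vanishes. \Cref{thm:away-from-magnetic-zeros} cannot be used directly, since $B(0)=0$. Its proof does handle points $p\neq0$ near the origin, but that alone only gives escape to a distance comparable to $|p|$, not to a fixed $r$.

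\emph{Step 1 (a radial field curve).} For small $\rho$, consider the map $S^2\to S^2$, $\omega\mapsto B(\rho\omega)/|B(\rho\omega)|$, which is well defined because $0$ is an isolated zero. Any continuous self-map of $S^2$ has either a fixed point or an antipodal point: a map with neither would be homotopic both to the identity and to the antipodal map, whose degrees are $1$ and $-1$. Hence every small sphere contains a point where $B(x)\times x=0$ and $x\neq0$. The set $Z=\{x: B(x)\times x=0\}$ is real-analytic and accumulates at $0$. By the curve selection lemma it contains an analytic arc $\gamma:[0,s_0)\to\R^3$ with $\gamma(0)=0$, $\gamma(s)\neq0$ for $s>0$, parametrized so that $|\gamma(s)|=s$. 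Along $\gamma$ we then have $B(\gamma(s))=\beta(s)\,\gamma(s)/s$ with $\beta$ analytic and $|\beta(s)|\sim c\,s^{k}$, where $k\ge1$ is the vanishing order. Since $\gamma$ is analytic, $\gamma'(s)$ tends to a unit vector $e$ and the curvature satisfies $\kappa(s)=O(s^{m})$ for some $m\ge -1$. In particular $\gamma'(s)$ is nearly parallel to $B(\gamma(s))$ for small $s$.

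\emph{Step 2 (shadowing the arc).} Take initial data $x_0=\gamma(s_1)$ with $s_1\ll\delta$ and $\dot x_0=\eps\,\gamma'(s_1)$ plus a small transverse correction. Work in a tubular frame around $\gamma$, with arclength variable $s$ and transverse displacement $\xi\in\R^2$. The arc itself is not a solution, since following its curvature requires a force of size $\eps^2\kappa(s)$. This force is supplied by a transverse velocity $v_\perp$ with $|v_\perp|\,|\beta(s)|\approx\eps^2\kappa(s)$. Equivalently, there is a guiding-centre-type correction in which the particle gyrates slowly about the field line while drifting along it. The aim is to show that $s(t)$ is increasing with $\dot s\ge\eps/2$ while $|\xi(t)|\le\tfrac12 s(t)\,\theta$ for a small fixed cone aperture $\theta$, all the way until $s=r$.

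The tool I would try first is a Chetaev-type function on the cone $\{|\xi|<\theta s\}$. Two model computations support this. First, the self-similar rescaling $x=\lambda y$, $\tau=\eps t/\lambda$, $\lambda=\eps^{1/(k+1)}$ turns the equation into $y''=y'\times B_k(y)+O(\lambda)$, with $y'$ of unit length and $B_k$ the leading homogeneous part. Second, for the homogeneous field the ray $\{\sigma e\}$ is an exact straight-line escaping solution (when $\gamma$ is straight), since there $y'\parallel B_k(y)$.

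\emph{Main obstacle.} The hard step is controlling the transverse dynamics. A particle moving into a region where $|B|$ grows like $s^k$ may be reflected by the magnetic-mirror effect. This must be ruled out by showing that the magnetic moment $|v_\perp|^2/|B|$ stays negligible along the shadowing orbit. The point is that the transverse kinetic energy is forced only by the curvature term, which is $O(\eps^2 s^{m})$, whereas $|B|\gtrsim s^{k}$. The required bound is therefore a Gronwall estimate in the variable $\log s$, which is the natural time for the scale-invariant problem. The exponents $k$ and $m$ supplied by analyticity (Łojasiewicz-type bounds) should be exactly what makes this estimate close. If it does not close directly, the fallback is to choose $s_1=s_1(\eps)$ very small and iterate the rescaled limit problem scale by scale (dyadic shells $s\in[2^j s_1,2^{j+1}s_1]$), using the structural stability of the exact ray solution of the homogeneous problem to propagate the cone condition from one shell to the next.
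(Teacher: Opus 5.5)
There is a genuine gap, and it starts with the curve chosen in Step~1. Along your arc $\gamma$ the field is parallel to the position vector, $B(\gamma(s))\parallel\gamma(s)$, not to the tangent $\gamma'(s)$. Unless $\gamma$ is a straight ray, these two directions make an angle $\theta(s)>0$ at each fixed $s$, and $\theta(s)$ only tends to $0$ as $s\to0$.

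A particle launched with velocity $\eps\gamma'(s_1)$ therefore feels a transverse Lorentz force of size about $\eps\abs{B}\theta(s)$. For small $\eps$ this is much larger than the curvature defect $\eps^2\kappa(s)$, which you treat as the only obstruction. Adding a transverse velocity cannot fix this, because a particle moving along $\gamma$ has velocity exactly $\eps\gamma'$. In the slow regime the particle instead gyrates about, and drifts along, the field line through its launch point. Its guiding centre moves in the direction $\pm B/\abs B$ up to drifts of relative size $O(\eps/\abs B)$, so it can follow $\gamma$ only where $\theta(s)\lesssim\eps/\abs{B(\gamma(s))}$.

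Escape is therefore decided by whether integral curves of $B$ starting near $0$ reach $\partial\mathbb B_r$, and Step~1 says nothing about this. The curve $\gamma$ is not invariant under the field-line flow. Nothing you prove rules out the field line through $\gamma(s_1)$ leaving the cone around $e$ at a scale that tends to $0$ with $s_1$, and then staying trapped.

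Radial points on every small sphere exist for any continuous field with an isolated zero. This includes the $C^\infty$ field of \Cref{thm:smooth-counterexample}, whose zero is stable. Analyticity enters your argument only through curve selection, so all the real content would have to come from Step~2, which you leave as a heuristic (``should be exactly what makes this estimate close'', plus a fallback).

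The scaling heuristic also presupposes $B_k(e)\neq0$. The leading homogeneous part need not have an isolated zero, and $\gamma$ may be tangent to a direction where $B_k$ vanishes. In that case $\abs{\beta(s)}\sim cs^k$ fails, the ray lies in the zero set of the limit field, and the $O(\lambda)$ remainder you discard is what governs the dynamics near it.

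The missing ingredient is an actual field line converging to the zero, which is Brunella's theorem (\Cref{thm:Brunella}). This is a deep input, and \Cref{thm:smooth-counterexample} shows that nothing valid for smooth fields can replace it.

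With such a field line in hand, you need no uniform-in-scale analysis, no {\L}ojasiewicz exponents and no control of the mirror effect. Fix a point $p_j$ on the field line close to $0$. Take the compact arc from $p_j$ to $\partial\mathbb B_r$, on which $\abs B$ has a positive lower bound. Only then choose the speed $\eps_j$, depending on $j$. A Gronwall tracking estimate on this fixed arc, for a particle launched exactly tangent to it, gives escape: the transverse velocity stays $O(\eps)$ and the magnetic moment $O(\eps^2)$; see \Cref{prop:field-tracking} and \Cref{cor:field-segment}. Choosing the segment before the speed is what removes the estimate in $\log s$ down to the zero that you identified as the main obstacle.
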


The key idea of the proof, given in \Cref{sec:tracking}, is to compare
low-energy trajectories (equivalently, low-speed trajectories) of the second-order Newton--Lorentz equation
\eqref{eq:magnetic-equation} with magnetic field lines, which solve
the first-order ODE 
$$\dot q=B(q)\,.$$  
We prove that particles launched tangent to a
nonvanishing field line, in either direction, track any fixed finite
segment (after a suitable rescaling of time) as their speed tends to zero.
The second ingredient is the remarkable Brunella's theorem~\cite{Brunella1998},
which shows the existence of a field line converging to an isolated zero for any real-analytic vector field in $\mathbb R^3$.
Applying the comparison to finite arcs directed away from the zero
produces particle trajectories that start arbitrarily close to
$(0,0)$ and escape from a fixed neighborhood.

The $C^\infty$ category is radically different: even an isolated magnetic
zero can define a Lyapunov-stable equilibrium.

\begin{theorem}
\label{thm:smooth-counterexample}
There exists a divergence-free field $B\in C^\infty_c(\R^3;\R^3)$,
satisfying the flatness condition $D^\alpha B(0)=0$ for all multiindices
$\alpha$ but with $B(x)\neq0$ for all $0<\abs x<1$, such that
$(0,0)$ is a Lyapunov-stable equilibrium of \eqref{eq:magnetic-equation}.
\end{theorem}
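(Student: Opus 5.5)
The plan is to build $B$ as a superposition of countably many \emph{magnetic walls}: thin shells $\Sigma_k$ around the origin, nested and shrinking to $0$, in each of which the field is tangent to the shell. The walls are patched together so that the flatness $D^\alpha B(0)=0$ holds, $B$ is divergence-free, and $B$ has no zeros in $0<\abs x<1$.

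Stability would then follow from a \emph{barrier lemma}. It should say that a particle of speed $v$ cannot cross $\Sigma_k$ as long as $v$ is small compared with a positive quantity $\Phi_k$, the tangential flux of $B$ through $\Sigma_k$ per unit length. Given this lemma, choose for each $\eps>0$ an index $k$ with $\Sigma_k\subset\{\abs x<\eps\}$. Take $\delta<\min(\Phi_k,\dist(0,\Sigma_k))$, with the implicit constant absorbed. By the speed conservation \eqref{eq:speed-conservation}, every trajectory with $\abs{x_0}+\abs{\dot x_0}<\delta$ stays inside $\Sigma_k$ for all time. Flatness at $0$ costs nothing: $\Phi_k$ may decay as fast as we like, because $\delta$ is allowed to depend on $\eps$.

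To make the barrier lemma rigorous I would use exact conserved quantities rather than adiabatic invariants, since guiding-centre drift estimates hold only on finite time intervals. The model case is planar. If $B=b(z)e_y$, then $A=-\beta(z)e_x$ with $\beta'=b$, and $\dot x_1+\beta(z)$ is conserved. This gives
\[
\abs{\beta(z(t))-\beta(z(0))}\le 2\abs{\dot x_0},
\]
so a layer with $\int b\,dz>2\abs{\dot x_0}$ is impenetrable. To obtain closed walls I would work in the axisymmetric class
\[
B=\nabla\psi\times\nabla\theta+F\,\nabla\theta,
\]
with $\psi,F$ smooth functions of $(r,z)$ and $\psi=O(r^2)$. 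This class is automatically divergence-free, and the canonical angular momentum $p_\theta=r^2\dot\theta+\psi$ is conserved. Since $\abs{r^2\dot\theta}\le r\abs{\dot x_0}$, we get
\[
\abs{\psi(x(t))-\psi(x_0)}\le (r(t)+r(0))\abs{\dot x_0}.
\]
Hence a large jump of $\psi$ across a toroidal region is a barrier. I would take $\psi$ to have nested closed level curves in the meridian half-plane, surrounding points at distance $\sim 2^{-k}$ from the origin. The coefficients are scaled like $\exp(-4^k)$ so that all derivatives vanish at $0$. The toroidal part $F$ fills in the zeros of the poloidal part, so that $B\ne0$ off the origin. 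Finally, cut off outside the unit ball, keeping compact support and zero divergence.

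The main obstacle is the symmetry axis. Level sets of $\psi$ cannot separate $0$ from infinity near $r=0$, because $\psi$ vanishes on the axis. A particle could therefore slide out along the $z$-axis, where the field is purely $B_z$ and is necessarily nonzero. I would first try to close the walls near the axis with a second conserved quantity: make $B$ locally translation-invariant in a rotated direction in small caps around the poles. Then the planar momentum argument applies there. One then has to check that the two barrier regions overlap in a way that leaves no leak. If this patching fails, the fallback is to abandon axisymmetry near the poles and replace the barrier lemma by a direct energy-flux argument for general tangential divergence-free shells. This fallback is where I expect the genuine difficulty to lie.
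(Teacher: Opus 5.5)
You have a genuine gap, and it is where you flag it: nothing in the proposal produces a wall that actually encloses the origin.

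With the zero on the symmetry axis, every level torus of $\psi$ surrounds a circle centred on that axis. So $0$ lies in the unbounded component of its complement, and none of your walls separates it from infinity. Exact axisymmetry near the axis is in fact fatal. On the axis $B$ is parallel to $e_z$, so $x(t)=\eps t\,e_z$ is an exact solution of \eqref{eq:magnetic-equation} that escapes for every $\eps>0$.

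The cap repair cannot be completed in the form you describe either. Suppose the closed wall were a spherical shell foliated by spheres to which $B$ is tangent. Then the hairy ball theorem forces a zero of $B$ on every sphere, contradicting $B\neq0$ for $0<\abs x<1$. So in any such patching $B$ must cross the wall somewhere in the junction between the axisymmetric region and the translation-invariant caps. There neither $p_\theta$ nor the planar momentum is conserved. Your fallback, a general flux barrier lemma for arbitrary tangential shells, is not established. Your estimate on $\abs{\psi(x(t))-\psi(x_0)}$ also uses conservation of $p_\theta$ along the whole trajectory from $x_0$. That breaks down as soon as the symmetry is broken anywhere along the trajectory.

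The missing idea is that the walls should be tori with the zero \emph{inside} the solid torus, and that the rotation axis must change from scale to scale. A collar invariant under rotations about a fixed line $L$ bounds a rotation-invariant solid torus. If that solid torus contains a point off $L$, it contains the whole circle through it, so coaxial barriers can never shrink to a point.

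The paper proceeds as follows:
\begin{enumerate}
\item It builds one block $S_0$ whose outer collar is axisymmetric. There $\psi=-r^2/2$ and $K_0=F_*\partial_h$ give a barrier via Lemma~\ref{lem:noether-barrier}.
\item It places a similar copy $g(S_0)$, with $g$ a similarity of ratio $q$, inside a tube $T_1$ around a poloidal circle of $S_0$. This is the Kozlov--Treshchev nesting, and it gives $\bigcap_k S_k=\{O\}$ with $O$ inside every barrier.
\item The barrier lemma is localized: $v\cdot K+\psi$ only has to be conserved on the trajectory segment between the last contact with $\Sigma_-$ and the first contact with $\Sigma_+$. So the transition regions need no symmetry at all.
\item The transition regions are filled by Lemma~\ref{lem:solenoidal-gluing}. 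It interpolates volume-weighted components that share a positive $d\alpha$ component, which keeps the field divergence-free and nowhere zero.
\end{enumerate}

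Two of your remarks are correct and match Steps~3--4 of the paper. Flatness costs nothing, since the amplitudes can decay super-polynomially ($\lambda_k=q^{k^2}$ in the paper). And the barrier must be chosen before $\delta$.
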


The key idea of the proof, given in \Cref{sec:counterexample}, is to
place magnetic barriers around the origin at successively smaller
scales.  Each barrier is a toroidal
collar where a conserved angular momentum prevents outward
crossing below a positive speed threshold.  Rapidly decreasing
amplitudes make the field smooth and flat at the origin, while a
barrier within each neighborhood gives Lyapunov stability.  To keep
the zero isolated, we adapt the nested-torus geometry of Kozlov and
Treshchev~\cite{KozlovTreshchev1999}, filling the regions between barriers with nonvanishing
divergence-free fields.
A consequence of \Cref{thm:smooth-counterexample} is that even the
infinite jet of the magnetic field at a point does not determine
the Lyapunov stability of the corresponding equilibrium (obviously, if $B\equiv 0$ in a neighborhood of the origin, the equilibrium $(0,0)$ is Lyapunov unstable).

These results can be compared with the classical stability problem
for potential systems $$\ddot x=-\nabla V(x)\,.$$  The
Lagrange--Dirichlet theorem gives stability at strict local minima,
but its converse is false for smooth potentials~\cite{Laloy1976}.  For real-analytic
potentials on $\R^2$, Palamodov proved that a critical point that is
not a local minimum defines an unstable equilibrium~\cite{Palamodov1977}.  For real-analytic potentials on $\R^3$, the
corresponding assertion remains open; see~\cite{BurgosPaternain2022Converse} for partial results.

Relatively little was previously known in general about the stability
of dynamics in magnetic fields.  Particle motion near specific magnetic
zeros, including cusp fields,
was studied in~\cite{JaroensutasineeRowlands2000}.  Bolotin and
Negrini~\cite{BolotinNegrini1995} studied asymptotic motions in
Lagrangian systems with gyroscopic forces.  Burgos and Paternain
\cite{BurgosPaternain2022} gave an instability criterion that applies,
in particular, to the motion of charged particles in magnetic fields (the assumption is that the magnetic field must satisfy the integrability condition $B\cdot \text{curl } B=0$ in a neighborhood of the equilibrium; this is the same as $B$ being gradient-like).  For systems with a nonconstant
analytic potential and a magnetic field, Burgos~\cite{Burgos2026} obtained further instability criteria under a weak assumption.

\section{Lyapunov instability of analytic magnetic fields with isolated zeros}
\label{sec:tracking}

To prove \Cref{thm:analytic-isolated}, we shall use a magnetic field
line converging to the origin to construct particle trajectories that
escape from a fixed neighborhood.  The key ingredient of the proof is a
comparison between slow particle trajectories and fixed nonvanishing
field-line segments. The conclusion of this comparison is that the smaller the speed of the particle the closer the trajectory of the particle to the fixed piece of the magnetic line (if the initial condition is a point of the magnetic line and the velocity is tangent to this line).  We then apply it to finite field-line arcs followed away
from the zero, choosing the particle speed separately for each arc.

Let $x(t)$ be a solution of the Newton--Lorentz equation~\eqref{eq:magnetic-equation} with constant speed
$|x(t)|=\eps>0$, and introduce a slow time variable $s=\eps t$:
\[
    X(s):=x(s/\eps),
    \qquad
    V(s):=X'(s)=\frac{1}{\eps} \dot x(s/\eps)\,,
\]
where $'$ denotes derivative with respect to~$s$. Then the pair $(X(s),V(s))$ satisfies the ODEs
\begin{equation}
    X'=V\,,
    \qquad
    \eps V'=V\times B(X)\,,
    \qquad
    \abs V=1\,.
    \label{eq:slow-system}
\end{equation}
Conversely, if $X(s)$ is a unit-speed curve (i.e., $|X'|=1$) with curvature vector
$K:=X''$, then the equation
\begin{equation}
    \eps K=X'\times B(X)
    \label{eq:magnetic-path}
\end{equation}
implies that $x(t):=X(\eps t)$ is a solution of the Newton--Lorentz equation~\eqref{eq:magnetic-equation} with speed $|\dot x(t)|=\eps$. Accordingly, up to the scaling $s=\eps t$, both the Newton--Lorentz equation~\eqref{eq:magnetic-equation} and Equation~\eqref{eq:magnetic-path} (or Equation~\eqref{eq:slow-system}) are equivalent.

The following lemma gives the aforementioned comparison estimate for particles launched
tangent to a magnetic field-line. This is related to the guiding-center
approximation; see \cite{FilbetRodrigues2020} for a rigorous treatment
in three dimensions.

\begin{lemma}
\label[lemma]{prop:field-tracking}
Let $B\in C^2(U;\R^3)$ be a vector field, and let
$\Gamma:[0,S]\to U$ be a unit-speed curve tangent to a field-line, i.e., 
\[
    \Gamma'(s)=\sigma\frac{B(\Gamma(s))}{\abs{B(\Gamma(s))}},
    \qquad
    \sigma\in\{+1,-1\},
\]
with $B(\Gamma(s))\ne0$ on $[0,S]$.  Then there are constants
$C_\Gamma,\eps_\Gamma>0$, depending on $B$ and the fixed segment
$\Gamma$ but not on $\eps$, such that for every
$0<\eps<\eps_\Gamma$ the solution of the slow system~\eqref{eq:slow-system} with
\[
    X(0)=\Gamma(0),
    \qquad
    V(0)=\Gamma'(0)
\]
exists on $[0,S]$ and satisfies
\begin{equation}
 \sup_{0\le s\le S}
 \bigl(\abs{X(s)-\Gamma(s)}+
       \abs{V(s)-\Gamma'(s)}\bigr)
 \le C_\Gamma\eps.
 \label{eq:tracking-estimate}
\end{equation}
\end{lemma}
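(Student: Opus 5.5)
The plan is to split the velocity into components parallel and perpendicular to the field direction and show the perpendicular part stays $O(\eps)$. The tool is an adiabatic-invariant argument, with one integration by parts in time exploiting the fast gyration. Fix a compact tubular neighborhood $N=\{y:\dist(y,\Gamma([0,S]))\le\rho\}\subset U$ on which $\abs B\ge m>0$. On $N$, the unit field $b:=B/\abs B$ is $C^2$, its derivative $Db$ is $C^1$, and all these quantities are bounded by constants depending only on $B$ and $\Gamma$.

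Let $(X,V)$ solve \eqref{eq:slow-system} and set $W:=V-(V\cdot b(X))\,b(X)$ and $\lambda:=V\cdot b(X)$. As long as $X$ stays in $N$, we have $\abs V=1$ and $W(0)=0$. Taking $b\times$ of \eqref{eq:slow-system} gives the key identity
\[
 W=\eps\,\frac{b(X)\times V'}{\abs{B(X)}} .
\]
Differentiating $\abs W^2$ and using $W\cdot b=0$ and $W\cdot(V\times B)=W\cdot(W\times B)=0$ yields
\[
 \tfrac{d}{ds}\abs W^2=-2\lambda\, W\cdot Db(X)V=-2\lambda^2\,W\cdot\kappa(X)-2\lambda\,W\cdot Db(X)W,
\]
where $\kappa:=Db\,b$ is the field-line curvature, a $C^1$ function because $B\in C^2$. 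The quadratic term is harmless. The linear term $W\cdot\kappa$ is only $O(\abs W)$, and using it directly would give merely $\abs W\lesssim s$ or $\sqrt\eps$. This is the main obstacle.

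To overcome it, I substitute the identity for $W$:
\[
 \lambda^2 W\cdot\kappa=\eps\,\lambda^2\,\frac{(\kappa\times b)\cdot V'}{\abs B}
 =\eps\frac{d}{ds}\Bigl[\lambda^2\frac{(\kappa\times b)(X)\cdot V}{\abs{B(X)}}\Bigr]-\eps\,V\cdot\frac{d}{ds}\Bigl[\lambda^2\frac{\kappa\times b}{\abs B}(X)\Bigr].
\]
The last derivative involves $X'=V$ (bounded, and using $C^1$ of $\kappa$) and $\lambda'$. For $\lambda'$ I note that $\lambda'=V'\cdot b+V\cdot Db\,V$ and $V'\cdot b=0$, so $\lambda'$ is bounded as well. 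Hence the remainder is $O(\eps)$.

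The boundary term is better than it looks. Since $(\kappa\times b)\cdot V=(\kappa\times b)\cdot W$, it is $O(\eps\abs W)$, not $O(\eps)$. In fact, by the same computation the remainder has the form $O(\eps\abs W)+O(\eps)\cdot(\text{terms carrying a factor }W)$; I will check that each term there carries either a factor $W$ or an extra $\eps$. If some term fails to, I would repeat the integration by parts once more. Integrating from $0$ gives
\[
 \abs{W(s)}^2\le C\eps\abs{W(s)}+C\int_0^s\bigl(\abs W^2+\eps\abs W\bigr)\,d\tau ,
\]
and a Gronwall/bootstrap argument for $\sup_{[0,s]}\abs W$ yields $\abs W\le C\eps$ on $[0,S]$.

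Consequently $\lambda^2=1-\abs W^2=1-O(\eps^2)$. By continuity from $\lambda(0)=\sigma$, the sign is preserved, so $\lambda=\sigma+O(\eps^2)$ and $X'=V=\sigma b(X)+O(\eps)$. Since $\Gamma'=\sigma b(\Gamma)$ with $b$ Lipschitz on $N$, Gronwall gives $\abs{X-\Gamma}\le C\eps$, and then
\[
 \abs{V-\Gamma'}\le\abs{b(X)-b(\Gamma)}+\abs W+O(\eps^2)\le C\eps .
\]
Existence on $[0,S]$ comes from a standard continuation argument. The estimates hold as long as $X\in N$, and they force $\abs{X-\Gamma}\le C\eps<\rho$ once $\eps<\eps_\Gamma:=\rho/(2C)$. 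So the solution never leaves $N$ before time $S$, which gives \eqref{eq:tracking-estimate}.
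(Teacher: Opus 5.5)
Your argument is correct and is essentially the paper's, written in integrated form. With $\Theta:=(\kappa\times b)/\abs B$, the vector $-\eps\lambda^2\Theta$ is exactly the paper's corrector $w_0=-\frac{\eps a^2}{\abs B}RK$; the paper's $n,a,K$ are your $b,\lambda,\kappa$, and $RK=K\times n$. So your integration by parts amounts to estimating $\abs W^2-2W\cdot w_0=\abs{W-w_0}^2-\abs{w_0}^2$, which is the paper's $\abs z^2$ up to $O(\eps^2)$.

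The paper subtracts $w_0$ directly in the vector equation for the normal velocity. There the $O(\eps)$ forcing appears only through $z\cdot F_\eps$, so the needed factor of $z$ comes for free. Your version explains where $w_0$ comes from, but then that factor has to be checked by hand. The check you deferred is the crux: an $O(\eps)$ remainder term without a factor $W$ would only give $\abs W=O(\sqrt{\eps})$. Integrating by parts again would not remove such a term, since it has no gyrating factor.

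The check does succeed. Since $\Theta\perp b$, you have $V\cdot\frac{d}{ds}[\lambda^2\Theta(X)]=2\lambda\lambda'\,\Theta\cdot W+\lambda^2\,V\cdot D\Theta(X)V$. After inserting $V=\lambda b+W$, the only term without an explicit $W$ is $\lambda^4\,b\cdot D\Theta\,b$. It vanishes identically: differentiating $\Theta\cdot b\equiv0$ along $b$ gives $b\cdot D\Theta\,b=-\Theta\cdot\kappa=0$, because $\Theta$ is parallel to $\kappa\times b$. Hence the remainder is $O(\eps\abs W)$, your inequality for $\abs{W(s)}^2$ holds as stated, and the rest of your proof goes through as written.
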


\begin{proof}
Choose an open set $U_0\Subset U$ such that
$\Gamma([0,S])\subset U_0$, $B$ does not vanish on $\overline{U_0}$, and
\[
 d_\Gamma
 :=\dist\bigl(\Gamma([0,S]),\partial U_0\bigr)>0.
\]
For fixed $\eps>0$, let $\tau_\eps\le S$ be the supremum of the times on
which the solution $X(s)$ of~\eqref{eq:slow-system} with the aforementioned initial conditions exists and remains in $U_0$.  On $U_0$ set
\[
    b:=\abs B\,,
    \qquad
    n:=\frac B b\,.
\]
All along this proof, fields and functions without an explicit argument are
evaluated at $X(s)$, and all the equations and inequalities hold on the whole interval $s\in[0,\tau_\eps)$.  On $[0,\tau_\eps)$ we define
\[
 a:=V\cdot n(X)\,,
 \qquad
 w:=V-a n(X)\,,
 \qquad
 P:=\Id-n\otimes n\,,
\]
which will allow us to compare the tangential and perpendicular components of the velocity $V(s)$ with respect to the field-line $\Gamma(s)$. In particular, $w\perp n$ and 
$$a^2+\abs w^2= a^2+1-2a^2 + a^2=1\,.$$  
On the normal plane to the curve $\Gamma(s)$ (for each $s$) define
\[
    Rq:=q\times n\,,
    \qquad
    K:=(n\cdot\nabla)n\,,
    \qquad
    D_s^\perp q:=Pq'\,,
\]
for $q(s)\in\mathbb R^3$. We use the same notation $K$ as for the curvature vector because $(n\cdot \nabla)n$ evaluated at $\Gamma(s)$ is precisely $\Gamma''(s)$.
Direct differentiation of Equation~\eqref{eq:slow-system} gives
\begin{equation}
 D_s^\perp w
 =\frac b{\eps}Rw-a^2K-aP(Dn)w\,,
 \label{eq:normal-w}
\end{equation}
where we have used that $P(Dn)V=aK+P(Dn)w$.

Next, set
\[
    w_0:=-\frac{\eps a^2}b RK.
\]
Since $R^2=-\Id$ on $n^\perp$ and $K\cdot n=0$, we obtain
\[
    \frac b{\eps}Rw_0=a^2K\,.
\]
Moreover,
\[
    a'=V\cdot(Dn)V,
\]
because $\eps V'\cdot n=(V\times B)\cdot n=0$.  Hence $a'$ is uniformly bounded, and
the $C^2$ regularity of $B$ together with the fact that $|a|\leq 1$, give the bound
\[
    \abs{w_0}+\abs{D_s^\perp w_0}\le C\eps\,.
\]
Now, for $z:=w-w_0$, Equation~\eqref{eq:normal-w} becomes
\[
 D_s^\perp z
 =\frac b{\eps}Rz-aP(Dn)z+F_\eps\,,
\]
where the term $F_\eps$ is bounded as $\abs{F_\eps}\le C\eps$. Noticing that the initial conditions of the trajectory $X(s)$ lead to $w(0)=0$, we obtain that
$z(0)=-w_0(0)$ and hence $\abs{z(0)}\le C\eps$.  Since $z\perp n$, we infer
\begin{equation}
 \frac12\frac{d}{ds}\abs z^2
 =z\cdot D_s^\perp z
 =-a\,z\cdot(Dn)z+z\cdot F_\eps
 \le C\abs z^2+C\eps \abs z
 \le C\abs z^2+C\eps^2\,,
 \label{eq:tracking-energy}
\end{equation}
with an $\eps$-independent constant that may differ from line to line.
Here the harmful $\eps^{-1}$ term has disappeared because of the identity
$z\cdot Rz=0$. Then, Gronwall's inequality applied to Equation~\eqref{eq:tracking-energy} gives, uniformly on $[0,\tau_\eps)$,
\[
    \abs z+\abs w\le C\eps.
\]
For small $\eps$, the identity $a^2+\abs w^2=1$ prevents $a$ from
vanishing.  Since $a(0)=\sigma\in\{+1,-1\}$, the previous bound of $|w|$ and this identity give
$$a=\sigma+O(\eps^2)\,.$$  Consequently, using the previous estimates, 
\[
    X'=V=w+an=\sigma n(X)+O(\eps)\,.
\]
Since $n$ is Lipschitz on $\overline{U_0}$ and $X(0)-\Gamma(0)=0$, comparison with
$\Gamma'=\sigma n(\Gamma)$ and another application of Gronwall's
inequality, readily yield
\[
 \abs{X(s)-\Gamma(s)}
 +\abs{V(s)-\Gamma'(s)}
 \le C_\Gamma\eps\,,
\]
for all $0\le s<\tau_\eps$.
Choosing a small $\eps_\Gamma$ so that
$C_\Gamma\eps_\Gamma<d_\Gamma/2$, we conclude that $X(s)$ cannot exit through $\partial U_0$ (otherwise, it would contradict the definition of $d_\Gamma$).
All the preceding estimates hold for any $0<\eps < \eps_\Gamma$, and if $\tau_\eps<S$, the solution $X(s)$ and the estimates extend by continuity to
$s=\tau_\eps$. Thus $\tau_\eps=S$, proving the bound~\eqref{eq:tracking-estimate}.  The constants depend only on $S$, the
chosen set $U_0$, a positive lower bound for $\abs B$ there, and the
corresponding $C^2$ bounds for $B$.
\end{proof}

The first consequence of the comparison in Lemma~\ref{prop:field-tracking} is that escape along finite
field-line segments implies instability of the particle equilibrium.

\begin{corollary}
\label[corollary]{cor:field-segment}
Suppose that there are $r>0$, points $p_j\to0$, finite numbers
$S_j>0$, and unit-speed nonvanishing field-line segments
$\Gamma_j:[0,S_j]\to U$ such that
\[
 \Gamma_j(0)=p_j,
 \qquad
 \Gamma_j(S_j)\in\partial\mathbb B_r(0),
 \qquad
 \Gamma_j'
 =\sigma_j\frac{B(\Gamma_j)}{\abs{B(\Gamma_j)}},
 \quad
 \sigma_j\in\{+1,-1\}.
\]
Then $(0,0)$ is Lyapunov unstable.
\end{corollary}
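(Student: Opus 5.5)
We apply \Cref{prop:field-tracking} separately to each segment $\Gamma_j$ and then undo the slow-time rescaling. Fix $r$ as in the statement and note that $B$ is $C^2$ near $\overline{\mathbb B_r(0)}$; this is the standing regularity of the lemma, and in the application to \Cref{thm:analytic-isolated} $B$ is real-analytic. We will verify the definition of Lyapunov instability with the radius $r/2$.

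Let $\delta>0$ be given. Since $p_j\to0$, we choose $j$ with $\abs{p_j}<\delta/2$. The segment $\Gamma_j:[0,S_j]\to U$ is a unit-speed curve with $\Gamma_j'=\sigma_j B(\Gamma_j)/\abs{B(\Gamma_j)}$ and $B\ne0$ along it. So \Cref{prop:field-tracking} applies and gives constants $C_j:=C_{\Gamma_j}$ and $\eps_j:=\eps_{\Gamma_j}$. These constants depend on $j$, which is harmless because $j$ is now fixed. We pick
\[
0<\eps<\min\Bigl\{\eps_j,\ \frac{\delta}{2},\ \frac{r}{2C_j}\Bigr\}.
\]
Let $(X,V)$ be the solution of \eqref{eq:slow-system} with $X(0)=p_j$ and $V(0)=\Gamma_j'(0)$. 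By \eqref{eq:tracking-estimate} it exists on $[0,S_j]$ and satisfies $\abs{X(S_j)-\Gamma_j(S_j)}\le C_j\eps<r/2$. Since $\abs{\Gamma_j(S_j)}=r$, it follows that $\abs{X(S_j)}>r/2$.

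Now define $x(t):=X(\eps t)$. By the equivalence between \eqref{eq:slow-system} and \eqref{eq:magnetic-equation} noted before the lemma, $x$ solves the Newton--Lorentz equation with initial data
\[
x_0=p_j,\qquad \dot x_0=\eps\,\Gamma_j'(0).
\]
Hence $\abs{x_0}+\abs{\dot x_0}<\delta/2+\eps<\delta$. At the time $T:=S_j/\eps$ we get $\abs{x(T)}=\abs{X(S_j)}\ge r/2$. Since $\delta$ was arbitrary, $(0,0)$ is Lyapunov unstable with escape radius $r/2$.

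The only delicate point is uniformity. The constants $C_{\Gamma_j}$ and $\eps_{\Gamma_j}$ may degenerate as $j\to\infty$, because the segments approach the zero of $B$ and $S_j$ may blow up. The argument avoids this by choosing $j$ first and only then choosing $\eps$ depending on $j$; the definition of instability needs only one escaping solution for each $\delta$. A second point is that we use the fixed radius $r/2$ rather than $r$, so the $O(\eps)$ tracking error never pushes the endpoint back inside the escape ball.
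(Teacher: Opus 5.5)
Your proposal is correct and follows the paper's proof essentially step for step. You fix $j$ with $\abs{p_j}<\delta/2$, apply the tracking lemma to that single segment, and take $\eps<\min\{\delta/2,\eps_{\Gamma_j},r/(2C_{\Gamma_j})\}$. Undoing the slow-time scaling then gives escape to radius $r/2$ at time $T=S_j/\eps$.
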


\begin{proof}
Given $\delta>0$, choose $j$ such that $\abs{p_j}<\delta/2$.  Apply
\Cref{prop:field-tracking} to this fixed $j$-th segment, and denote the  constants that appear in the estimate of the lemma
by $C_j$ and $\eps_j^*$.  Choose
\begin{equation}
 0<\eps_j<
 \min\left\{\frac\delta2,\eps_j^*,\frac r{2C_j}\right\}.
 \label{eq:segment-dependent-speed}
\end{equation}
The corresponding slow trajectory $X_j(s)$ provided by Lemma~\ref{prop:field-tracking} satisfies
\[
 \abs{X_j(S_j)}
 \ge \abs{\Gamma_j(S_j)}-C_j\eps_j
 \ge \frac r2.
\]
The slow solution $X_j(s)$ yields a solution $x_j(t)=X_j(\eps_j t)$ to the Newton--Lorentz equation~\eqref{eq:magnetic-equation} with initial condition $x_j(0)=p_j$ and $\abs{\dot x_j(0)}=\eps_j$. Therefore, the initial condition in phase space has size
$\abs{p_j}+\eps_j<\delta$, and the escape occurs at the finite physical
time
\[
    T_j=\frac{S_j}{\eps_j}.
\]
All these quantities may depend on the chosen segment. Since $\delta$ can be any real number, and $r$ is fixed, the instability follows.
\end{proof}

The final ingredient in the proof of \Cref{thm:analytic-isolated} is Brunella's theorem on the existence of integral curves asymptotic to isolated zeros of analytic vector fields in $\mathbb R^3$. We state it for the sake of completeness:

\begin{theorem}[Brunella~\cite{Brunella1998}]
\label{thm:Brunella}
Let $Y$ be a real-analytic vector field near $0\in\R^3$.  If $0$ is an
isolated zero of $Y$, then $Y$ has a nonconstant integral curve $q(t)$
such that $q(t)\to0$ as $t\to+\infty$ or as $t\to-\infty$.
\end{theorem}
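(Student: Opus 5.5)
The statement is Brunella's theorem, which the paper quotes and uses as a black box. If I had to prove it myself, I would use a polar blow-up at the isolated zero and then a local analysis at singular points of the blown-up field on the exceptional divisor. Write $Y=Y_m+Y_{m+1}+\cdots$ with $Y_m\not\equiv0$ its lowest-order homogeneous part. Pass to polar coordinates $x=\rho\theta$, $(\rho,\theta)\in[0,\rho_0)\times S^2$. After dividing by $\rho^{m-1}$, which only reparametrizes orbits in $\rho>0$, we get a real-analytic vector field $\widetilde Y$ on $[0,\rho_0)\times S^2$. Its radial component is $\rho\,(\theta\cdot Y_m(\theta))+O(\rho^2)$, and its spherical component is $P_\theta Y_m(\theta)+O(\rho)$, where $P_\theta$ is the projection onto $T_\theta S^2$. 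An orbit of $\widetilde Y$ in $\{\rho>0\}$ whose $\rho$-coordinate tends to $0$ is exactly an orbit $q(t)\to0$ of $Y$ in one time direction. The blow-up time may become infinite or finite, but after the reparametrization an orbit tending to $0$ can only reach $0$ as $t\to\pm\infty$.

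I would split into two cases according to whether the divisor $E=\{\rho=0\}$ is invariant.

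\textbf{Dicritical case.} Suppose the tangential part $P_\theta Y_m$ vanishes identically, i.e.\ $Y_m=h\,x$ with $h$ homogeneous of degree $m-1$. Then one must first divide by a further power of $\rho$. If the resulting field is transverse to $E$ at some point, every nearby orbit hits $E$, hence converges to $0$ in the original coordinates, and we are done. Otherwise $E$ is invariant and we are in the second case.

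\textbf{Invariant-divisor case.} The restriction $\widetilde Y|_E$ is an analytic tangent vector field on $S^2$. By the hairy ball theorem it has a zero $\theta_0$, and in fact the zeros have total index $2$ by Poincar\'e--Hopf. At such a point $p=(0,\theta_0)$ the linearization of $\widetilde Y$ has the transverse eigenvalue $\lambda=\theta_0\cdot Y_m(\theta_0)$ in the $\rho$-direction.
\begin{itemize}
\item If $\lambda\neq0$ and $p$ is an isolated singular point, the center--stable/unstable manifold theorem gives an invariant manifold through $p$ transverse to $E$. An orbit on it in $\{\rho>0\}$ tends to $p$, hence to $0$.
\item If $p$ is degenerate (e.g.\ $\lambda=0$, or nilpotent linear part), I would perform further local blow-ups, weighted if necessary, at $p$ and iterate.
\end{itemize}
Two observations keep this iteration under control. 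First, because $0$ is an isolated zero, the singular set of $\widetilde Y$ lies in $E$. Second, the Poincar\'e--Hopf index count on the successive divisors forces the existence of a singular point with a nonzero index sum. Such a point cannot be entirely of ``center type'' in all directions, because in an analytic setting periodic-type behaviour cannot surround a degenerate point with nonzero index. One therefore extracts a singular point at which some hyperbolic-sector or saddle-type separatrix leaves the divisor. Ruling out the remaining configurations would use the Łojasiewicz inequality and the real-analytic version of the Seidenberg/Dumortier desingularization restricted to the two-dimensional divisor.

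The main obstacle is this last step. Resolution of singularities of three-dimensional analytic vector fields is genuinely hard; a general theorem of this type was only obtained later, by Panazzolo. The key point to secure is that some finite sequence of blow-ups produces a singular point on the total divisor that has a separatrix not contained in the divisor. To do this I would try to avoid full resolution. Instead I would combine the two-dimensional Seidenberg reduction on each exceptional sphere with an index argument showing that not all reduced singular points can have the divisor as their only invariant manifold. The transverse separatrix then comes from the stable/unstable manifold theorem at a reduced (hyperbolic or saddle-node) point.
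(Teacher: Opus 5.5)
\textbf{There is a genuine gap.} The paper does not prove this statement. It quotes it from Brunella and uses it as a black box in Section~2, so your sketch has to stand on its own, and as written it is not a proof.

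\emph{What you do establish are the easy cases.} The dicritical case is fine. In fact, after the extra division by $\rho$ the radial component on $E$ is $h(\theta)\not\equiv0$, so your ``otherwise'' branch never occurs. The case where some zero $\theta_0$ of the divisor field has $\lambda(\theta_0)=\theta_0\cdot Y_m(\theta_0)\neq0$ is also fine, provided you use the genuine stable or unstable manifold of $p$. That manifold contains the generalized eigendirection of $\lambda$, which is not tangent to $E$, so it meets $\{\rho>0\}$; a centre-stable manifold would not give convergence.

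\emph{The content of Brunella's theorem lies in the remaining case,} where every zero of $P_\theta Y_m$ on $S^2$ is also a zero of $Y_m$. This already happens whenever the linear part is $Y_1=(-x_2,x_1,0)$. Then the divisor field is the rotation of $S^2$ with two centres at the poles, $\lambda\equiv0$, and $D\widetilde Y$ at the poles has eigenvalues $0,\pm i$. So neither stable/unstable nor centre manifolds give anything. For this case you only describe a programme, and its decisive step---that finitely many blow-ups produce a singular point with a separatrix not contained in the total divisor---is asserted rather than proved.

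\emph{The mechanisms you propose for that step do not work as stated.}

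First, the principle that periodic-type behaviour cannot surround a degenerate point of nonzero index is false for the restricted field. Analytic centres such as $(-v,u)$ or $(-v^3,u^3)$ have index $+1$, and in the example above these are the only zeros on $E$. What you would need is a genuinely three-dimensional statement linking the index of a zero of the divisor field to the existence of an orbit of $\widetilde Y$ off $E$ tending to it. This cannot be read off from the restriction to $E$: separatrices of hyperbolic sectors of the restricted field lie inside $E$ and are of no use.

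Second, Seidenberg--Dumortier reduction of the restricted field does not control the transverse dynamics. Blowing up a point of $E$ in the three-dimensional space creates a hemisphere with boundary on the strict transform of $E$. It carries new singularities invisible to the planar reduction, and on it Poincar\'e--Hopf yields no positive index sum without a boundary analysis. You also give no invariant forcing the process to terminate.

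Third, even at a point where the restricted field is a saddle-node, a transverse separatrix is not automatic. Take $\dot x=x^2+\rho^2$, $\dot\rho=-2x\rho$, $\dot z=-z$. The restriction to $E=\{\rho=0\}$ is a saddle-node, and the transverse direction lies in the two-dimensional centre manifold $\{z=0\}$. The flow there is a single hyperbolic sector resting on $E$, and no orbit with $\rho>0$ tends to the origin.

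So you still need a global argument ruling out the situation in which every final singular point is of this kind, and that is exactly the missing idea. Citing Brunella, as the paper does, is the appropriate way to use this result.
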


Now we can apply Corollary~\ref{cor:field-segment} by extracting finite field-line segments from an integral curve of $B$ converging to the origin provided by Brunella's theorem.

Indeed, let $q(t)$ be a (nonconstant) integral curve of $B$
converging to the origin as $t\to+\infty$ or as $t\to-\infty$. Since $B$ is real-analytic and the origin is an isolated zero of $B$, such a curve exists by Theorem~\ref{thm:Brunella}. Fix a regular point
$q_*=q(t_*)$.  Choose $0<r<\abs{q_*}$ and an increasing (in absolute value) sequence of times $t_j$ (with $\abs{t_j}>\abs{t_*}$) such that $q_j:=q(t_j)\to0$ and $\abs{q_j}<r$.
For each $j$, follow the finite orbit arc $q(t)$ from $q_j$ toward $q_*$ and
truncate it at the first point encountered on
$\partial\mathbb B_r(0)$.  By construction, the constructed arc is compact and $\abs B$ is nonvanishing at any of its points.  Its arclength, call it $S_j$, is finite because it is the image of a compact time interval by a smooth map.  Reparametrize this truncated arc by arclength.
If $q(t)\to0$ as $t\to+\infty$, the resulting segment is followed against
$B$; if $q(t)\to0$ as $t\to-\infty$, it is followed with $B$. This gives a countable family of arcs $\Gamma_j$ as in
\Cref{cor:field-segment}. The application of this corollary then proves Lyapunov instability. This completes the proof of Theorem~\ref{thm:analytic-isolated}.

\section{A smooth divergence-free magnetic field giving rise to a Lyapunov stable equilibrium}
\label{sec:counterexample}

To prove \Cref{thm:smooth-counterexample}, we need toroidal magnetic barriers
enclosing successively smaller neighborhoods of a point.  Each barrier
needs only to stop particles below its own positive speed threshold.
The task is to fit these barriers into a smooth divergence-free vector field
without introducing any further zeros.  The method of proof consists in first building one toroidal
barrier, and then use the nested-torus geometry of Kozlov and Treshchev
\cite[Section~3]{KozlovTreshchev1999} to join it to a smaller copy.
Iteration with rapidly decreasing amplitudes gives smoothness, while
the retained nested barriers give stability.  For clarity, we divide the
argument into four steps.

\subsubsection*{Step 1: Toroidal magnetic barriers}

The barrier comes from a conserved angular momentum with two terms: one
linear in the velocity, the other a function of position.  If the
latter takes different values on the two boundaries of the collar (which is an annular toroidal shell), sufficiently
slow particles cannot cross the collar from the inner boundary to the outer one: the velocity term is too
small to compensate for that difference.  To state the criterion, write
\[
    \beta_B:=\iota_B\vol,
    \qquad
    \vol:=dx_1\wedge dx_2\wedge dx_3.
\]
Thus $\Div B=0$ if and only if $d\beta_B=0$. The following lemma proves the key property in order to construct dynamical barriers when the particle's speed is small.

\begin{lemma}
\label[lemma]{lem:noether-barrier}
Let $W^-\Subset W^+$ be bounded open regions of $\mathbb R^3$ such that
$\mathcal C:=\overline{W^+}\setminus W^-$ is a compact toroidal collar bounded
by $\Sigma_\pm:=\partial W^\pm$.  Suppose that there are a Euclidean Killing
field $K$ and a smooth function $\psi$ on a neighborhood of
$\mathcal C$, satisfying
\[
    d\psi=-\iota_K\beta_B,
    \qquad
    \psi|_{\Sigma_\pm}=\psi_\pm,
    \qquad
    \Delta:=\abs{\psi_+-\psi_-}>0\,,
\]
for some constants $\psi_\pm$. Set $M:=\max_{\mathcal C}\abs K$.  A solution $x(t)$ of the Newton--Lorentz equation~\eqref{eq:magnetic-equation} starting in $W^-$ cannot leave $W^+$ if
its speed is bounded as
$$|\dot x(t)|<\frac{\Delta}{2M}\,.$$
\end{lemma}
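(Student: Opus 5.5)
The plan is to use Noether's theorem for the Killing field $K$: the quantity
\[
    J(t):=\dot x(t)\cdot K(x(t))+\psi(x(t))
\]
should be conserved along every solution of \eqref{eq:magnetic-equation} while $x(t)$ stays in the neighborhood of $\mathcal C$ where $\psi$ is defined. We then compare the values of $J$ at a time on $\Sigma_-$ and a time on $\Sigma_+$.

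\textbf{Conservation of $J$.} Differentiating gives
\[
    \dot J=\ddot x\cdot K+\dot x\cdot (DK)\dot x+d\psi(\dot x).
\]
The middle term vanishes because $DK$ is antisymmetric for a Euclidean Killing field. For the first term we use
\[
    \ddot x\cdot K=(\dot x\times B)\cdot K=\vol(\dot x,B,K).
\]
The last term is
\[
    d\psi(\dot x)=-\beta_B(K,\dot x)=-\vol(B,K,\dot x).
\]
By cyclicity of the determinant, $\vol(\dot x,B,K)=\vol(B,K,\dot x)$, so these two terms cancel and $\dot J=0$. The one real obstacle is this sign bookkeeping between the cross product and the interior product; if the sign came out reversed, the hypothesis $d\psi=-\iota_K\beta_B$ is precisely the convention that makes the cancellation work. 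Divergence-freeness of $B$ is not used here, only the relation $d\psi=-\iota_K\beta_B$ on the neighborhood of $\mathcal C$.

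\textbf{Barrier argument.} Suppose $x(0)\in W^-$ and that $x$ leaves $W^+$. By continuity there is a first time $t_2$ with $x(t_2)\in\Sigma_+$, and a last time $t_1<t_2$ with $x(t_1)\in\Sigma_-$. On $[t_1,t_2]$ the curve lies in $\mathcal C$: after $t_1$ it cannot re-enter $W^-$ without crossing $\Sigma_-$ again, and before $t_2$ it has not left $\overline{W^+}$. Hence $\psi$ and $K$ are defined along the curve on this interval, and $J(t_1)=J(t_2)$ there. Since $\psi(x(t_2))=\psi_+$ and $\psi(x(t_1))=\psi_-$, we obtain
\[
    \Delta=\abs{\psi_+-\psi_-}=\abs{\dot x(t_1)\cdot K(x(t_1))-\dot x(t_2)\cdot K(x(t_2))}\le 2M\,\abs{\dot x},
\]
using that the speed is constant by \eqref{eq:speed-conservation} and that $\abs K\le M$ on $\mathcal C$. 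This contradicts $\abs{\dot x}<\Delta/(2M)$. Therefore the solution never reaches $\Sigma_+$ and cannot leave $W^+$.
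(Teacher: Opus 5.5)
Your proof is correct and follows the paper's argument essentially verbatim. You use the same conserved momentum $\dot x\cdot K+\psi$, which is conserved because $DK$ is antisymmetric and $(\dot x\times B)\cdot K=\beta_B(K,\dot x)$, and you compare its values at the last contact with $\Sigma_-$ and the first contact with $\Sigma_+$ using speed conservation; your justification that the intermediate arc stays in $\mathcal C$ is slightly more explicit than the paper's.
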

\begin{remark}
 In the vectorial language the condition $d\psi=-\iota_K\beta_B$ is equivalent to $K\times B=\nabla \psi$. This implies that the function $\psi$ is a first integral of both $B$ and $K$, and since $\psi$ is constant on each component $\Sigma_\pm$, the fields $B$ and $K$ are tangent to these surfaces. Moreover, in the case that $B$ is assumed to be divergence-free, $K$ is a Killing symmetry of $B$, i.e., $[B,K]=0$. 
\end{remark}
\begin{proof}
Let $x(t)$ be a solution to the Newton--Lorentz equation~\eqref{eq:magnetic-equation} with magnetic field $B$ as in the statement, and satisfying the stated speed bound. We use the notation $v=\dot x$ for the velocity field.  Since $K$ is Killing, its Jacobian matrix $DK$ is antisymmetric, and hence $v\cdot DK(x)v=0$.  Therefore, if the
trajectory $x(t)$ is assumed to be contained in $\mathcal C$, we obtain
\[
    \frac{d}{dt}\bigl(v(t)\cdot K(x(t))\bigr)
    =(v(t)\times B(x(t)))\cdot K(x(t))
    =\vol(B,K,v)|_{x(t)}
    =\beta_B(K,v)|_{x(t)}\,.
\]
It follows that
\[
\frac{d}{dt}\Big(v(t)\cdot K(x(t))+\psi(x(t))\Big)=0\,,
\]
and therefore
\begin{equation}
    v(t)\cdot K(x(t))+\psi(x(t))
    \quad\text{is constant while the trajectory stays in }\mathcal C\,.
    \label{eq:noether-momentum}
\end{equation}
If the trajectory leaves $W^+$, take its first contact $p_1$ with
$\Sigma_+$ and its last preceding contact $p_2$ with $\Sigma_-$. The
segment between these contacts lies in $\mathcal C$.  Comparing~\eqref{eq:noether-momentum} at the points $p_1$ and $p_2$, we infer that
\[
v(p_1)\cdot K(p_1)+\psi_-=v(p_2)\cdot K(p_2)+\psi_+\,,
\]
and hence
\[
\Delta=|v(p_1)\cdot K(p_1)-v(p_2)\cdot K(p_2)|\leq 2|v|M\,,
\]
where we have used the speed conservation, i.e., $|v(p_1)|=|v(p_2)|=|v|$.
We conclude from this inequality that $\abs v\geq \frac{\Delta}{2M}$, which contradicts the assumed speed bound on the solution $x(t)$. This shows that $\Sigma_+$ is a magnetic barrier for the trajectory $x(t)$, thus completing the proof of the lemma.
\end{proof}

To iterate this criterion, we need a spatial region with two boundary
properties: an outer toroidal collar with rotational symmetry, which provides
the barrier, and an inner torus to which the field is tangent, where
we can attach the next block.  We construct such a field while keeping
it divergence-free and nowhere zero. We observe that the outer and the inner toroidal surfaces are not nested, in the sense that the region in between is not diffeomorphic to a toroidal annulus. The geometry of the whole block will become clear in the next description.

Fix a sufficiently small constant $q>0$ and put $a:=2q$. As usual, parametrize a rotationally symmetric solid
torus $S_0$ by the embedding
\begin{equation}
    F(r,\varphi,h)
    :=\bigl((1+r\cos\varphi)\cos h,
            (1+r\cos\varphi)\sin h,-r\sin\varphi\bigr),
    \qquad 0\leq r\leq a,
    \label{eq:volume-embedding}
\end{equation}
where both angles lie in $\T:=\R/(2\pi\mathbb Z)$ and $\varphi$
collapses at $r=0$. As costumary in the literature, $h$ is called the toroidal angle and $\varphi$ is the poloidal (or meridional) angle. In suitable Cartesian coordinates, the core of the solid torus $S_0$ is the unit circle on a plane perpendicular to the third coordinate axis. Set $J:=1+r\cos\varphi$. In coordinates $(r,\varphi,h)$ the Euclidean volume and metric are
\begin{equation}
    F^*\vol=Jr\,dr\wedge d\varphi\wedge dh,
    \qquad
    F^*g_{\mathrm E}=dr^2+r^2d\varphi^2+J^2dh^2.
    \label{eq:F-volume}
\end{equation}
Next, we use a tube around a poloidal circle for the smaller (inner) block.
Indeed, using the real-valued representative of the angle $h$ near zero, define
\[
    T_1:=F\left\{(r-q)^2+h^2\leq\frac{q^2}{4}\right\}.
\]
Clearly, this solid torus lies in $\Int S_0$, because $q/2\leq r\leq3q/2<a$
there.  Its core is the poloidal Euclidean circle
\[
    \Lambda_1:=\{(1+q\cos\varphi,0,-q\sin\varphi):
                   \varphi\in\T\}.
\]
Thus $S_0$ is axisymmetric about the third coordinate axis, whereas
$T_1$ is a poloidal tube around the meridional circle $\Lambda_1$;
see Figure~\ref{fig:toroidal-block}.  This is the basic geometric
configuration that we will repeat at smaller scales.

\begin{figure}[tbp]
    \centering
    \includegraphics[width=\linewidth]{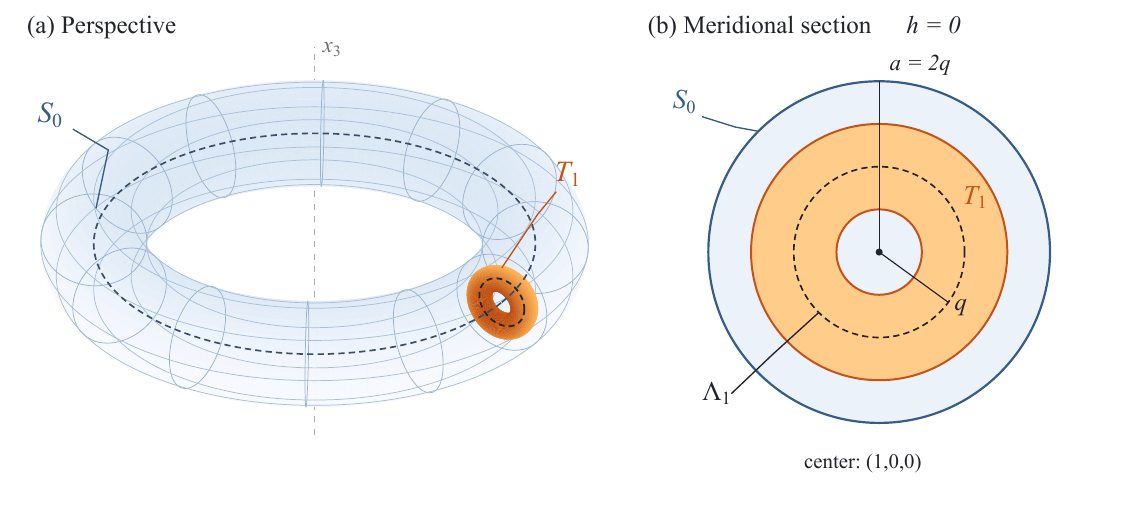}
    \caption{The basic block $(S_0,T_1)$, $T_1\Subset S_0$: perspective view (left)
    and meridional section $h=0$ (right).  Dashed curves mark the cores;
    proportions are schematic.}
    \label{fig:toroidal-block}
\end{figure}

Now we construct the magnetic field in this first block. To this end, choose a smooth cutoff function $0\leq\chi(r,h)\leq1$, periodic in $h$, equal
to one when $(r-q)^2+h^2\leq(3q/5)^2$ and to zero when
$(r-q)^2+h^2\geq(2q/3)^2$.  Its support lies in a single angular
chart and stays away from $r=0$.  Put
\[
    H(r,h):=r^2(1-\chi(r,h)).
\]
Thus $H=r^2$ near the core curve and for $r\geq q+2q/3=5a/6$, while $H=0$ on a
neighborhood of $T_1$. In particular, $H$ is of class $C^\infty$. The following vector field is designed so that these choices preserve
rotational symmetry in a neighborhood of $\partial S_0$ (specifically, on $r\geq 5a/6$) and leave only the tangential
$\varphi$ component near $T_1$:
\begin{equation}
    u:=\partial_\varphi-\frac{H_h}{r}\partial_r
                         +\frac{H_r}{r}\partial_h,
    \qquad
    B_0:=F_*\left(\frac{u}{J}\right).
    \label{eq:toroidal-block-field}
\end{equation}
Indeed, near $r=0$ we have $u=\partial_\varphi+2\partial_h$.
The transverse rotation field $\partial_\varphi$ extends smoothly
and vanishes at the core, where $u=2\partial_h$, thus giving smoothnes of $u$.  For $r>0$, the
$\partial_\varphi$ component of $u$ is one.  Hence $B_0$ is smooth
and nowhere zero on a neighborhood of $S_0$.

The factor $J^{-1}$ in \eqref{eq:toroidal-block-field} allows us to show that $B_0$ is divergence-free.  Indeed,
\begin{equation}
\begin{split}
    F^*\beta_{B_0}
    &=-r\,dr\wedge dh+dH\wedge d\varphi\\
    &=d\left(-\frac{r^2}{2}\,dh+H\,d\varphi\right).
\end{split}
\label{eq:model-magnetic-form}
\end{equation}
Notice that the $1$-form $Hd\varphi-\frac{r^2}{2}dh$ is smooth at the core curve, since $H=r^2$ there and
$r^2d\varphi$ is smooth.  In particular, we deduce that $\Div B_0=0$.

Near the poloidal surface $\partial T_1$ the vector field is
\[
    B_0=F_*\left(\frac{\partial_\varphi}{J}\right),
\]
so it is tangent to all nearby levels of $(r-q)^2+h^2$.  In the
outer toroidal collar $r\geq5a/6$, the vector field has the form
\begin{equation}
    B_0=F_*\left(\frac{\partial_\varphi+2\partial_h}{J}\right).
    \label{eq:outer-collar-u}
\end{equation}
Notice that the vector field $K_0:=F_*\partial_h$ is Killing, and generates Euclidean rotations about the
third coordinate axis.  By Equation~\eqref{eq:model-magnetic-form}, the
function 
$$\psi(F(r,\varphi,h)):=-r^2/2$$ satisfies the identity
\begin{equation}
    d\psi=-\iota_{K_0}\beta_{B_0}\,,
    \label{eq:barrier-scalar}
\end{equation}
in the aforementioned outer toroidal collar; in particular, $K_0$ is a Killing symmetry of $B_0$ there.
In view of Lemma~\ref{lem:noether-barrier}, we can now describe the toroidal magnetic barrier, which we will later reproduce at every scale. Set
\begin{equation}
    r_-:=\frac{7a}{8},\qquad r_+:=\frac{15a}{16},
    \qquad W^\pm:=F\{r<r_\pm\}.
    \label{eq:radii-barrier}
\end{equation}
Then $T_1\Subset W^-$, and
$\mathcal C_0:=\overline{W^+}\setminus W^-$ lies strictly inside
the region where $H=r^2$.  Across this collar, $\psi$ has a jump
of absolute value $(r_+^2-r_-^2)/2$, while $\abs{K_0}\leq1+r_+$.
We conclude from \Cref{lem:noether-barrier} that no solution $x(t)$ of the Newton--Lorentz equation with magnetic field $B_0$, lying in the collar $\mathcal C_0$, can cross the outer boundary $\partial W^+$ if its speed is below the following threshold 
\[
    \nu_*:=\frac{r_+^2-r_-^2}{4(1+r_+)}>0.
\]

This step has provided the basic construction that we will glue smoothly at smaller scales to create a magnetic field yielding a Lyapunov stable zero-speed equilibrium for the Newton--Lorentz equation.

\subsubsection*{Step 2: Gluing nested toroidal blocks}

The purpose of this step is to place a smaller copy of the entire construction of Step~1 inside the poloidal tube $T_1$. This smaller copy will be denoted by $S_1$, and the region between 
$S_1$ and $T_1$ will be filled with a suitable transition vector field.
To keep the eventual zero isolated, the field joining the two blocks
must be nowhere zero as well as divergence-free.  We arrange this by
placing the smaller block so that both boundary fields have a positive
component along the same angular variable in suitable product coordinates.

To begin with the construction, consider the orientation-preserving similarity
\begin{equation}
    g(x_1,x_2,x_3):=(1+qx_1,qx_3,-qx_2).
    \label{eq:g-similarity}
\end{equation}
Here $q$ is the small constant fixed in the previous step. It is easy to check that $g$ maps the core of $S_0$ onto the poloidal circle $\Lambda_1$, it has a contraction ratio
$q$, and has a fixed point $O=(1/(1-q),0,0)$. Moreover, for small $q$, we claim that
\begin{equation}
    S_1:=g(S_0)\Subset T_1\Subset S_0.
    \label{eq:nesting-first-step}
\end{equation}
To check this statement and to construct coordinates between the blocks, we
rescale the embedding $F$ by $g^{-1}$ and introduce rescaled variables $(\xi,\varphi,\zeta)$ as 
$$r=q(1+\xi)\,,\qquad \varphi=\varphi\,,\qquad h=q\zeta\,.$$  The map
\begin{equation}
    G_q(\xi,\varphi,\zeta)
    :=g^{-1}F(q(1+\xi),\varphi,q\zeta)
    \label{eq:transition-rescaling}
\end{equation}
parametrizes $g^{-1}(T_1)$ on $\xi^2+\zeta^2\leq1/4$.
Taylor expansion gives $G_q=G_0+O(q)$ in every fixed $C^m$ norm on
$\xi^2+\zeta^2\leq(3/5)^2$, where
\[
    G_0(\xi,\varphi,\zeta)
    =((1+\xi)\cos\varphi,(1+\xi)\sin\varphi,\zeta).
\]
Thus the outer toroidal surface of $g^{-1}(T_1)$ is a small deformation of a standard torus.
Next, we interpolate its parametrization with that of the inner torus $\partial S_0$,
keeping both unchanged near these boundaries.  Set
\[
    \Phi_0(s,\alpha,\gamma)
    :=G_0(s\cos\gamma,\alpha,s\sin\gamma),
\]
and choose a fixed $\eta\in C_c^\infty((1/3,3/5))$, with
$0\leq\eta\leq1$ and $\eta=1$ near $s=1/2$.
For $0<s<3/5$, define the interpolated parametrization
\[
    \Phi_q(s,\alpha,\gamma)
    :=(1-\eta(s))\Phi_0(s,\alpha,\gamma)
       +\eta(s)G_q(s\cos\gamma,\alpha,s\sin\gamma).
\]
Let us check that $\Phi_q$ is, indeed, a global parametrization. Since the cutoff is supported away from the core and the outer
edge of this coordinate domain, $\Psi_q:=\Phi_q\circ\Phi_0^{-1}$ extends
by the identity to a map $\Id+h_q$ of $\R^3$, with
$\norm{Dh_q}_{L^\infty}=O(q)<1$.  This is an orientation-preserving
diffeomorphism, and we can solve $x=y-h_q(x)$ by contraction to obtain its inverse. Moreover,
$\Psi_q$ fixes $S_0$, since $a=2q<1/3$, and maps the solid torus
$s\leq1/2$ onto $g^{-1}(T_1)$, as their boundaries correspond.
This proves the nesting property~\eqref{eq:nesting-first-step} and shows that restricting
$\Phi_q$ to $a\leq s\leq1/2$ gives a global parametrization, i.e., it defines product coordinates on
\begin{equation}
    P:=g^{-1}(T_1)\setminus\Int S_0
       \simeq[a,1/2]_s\times\T^2_{\alpha,\gamma}.
    \label{eq:transition-coordinate}
\end{equation}
In these coordinates the Euclidean volume is
\[
    \vol=\rho\,ds\wedge d\alpha\wedge d\gamma
\]
for some positive smooth function $\rho$. Moreover, we can endow the set $P$ with the vector field $(\Phi_q)_*(u/J)$, where $u$ is the vector field constructed in the previous step.
Near the inner boundary of $P$ we have $\alpha=h$ and $\gamma=-\varphi$, and
near the outer boundary it holds that $\alpha=\varphi$.  Hence the aforementioned vector field has the following exact expressions near the boundaries of $P$:
\[
\begin{aligned}
    B_0&=\frac{-\partial_\gamma+2\partial_\alpha}
                  {1+s\cos\gamma}
       &&\text{near }\partial S_0,\\
    (g^{-1})_*B_0
       &=\frac{\partial_\alpha}
                  {1+q(1+s\cos\gamma)\cos\alpha}
       &&\text{near }\partial g^{-1}(T_1).
\end{aligned}
\]
Both are tangent to the torus slices $\{s=const\}$ and have positive $\alpha$
component.  This common positive component prevents cancellation
during interpolation.  To preserve divergence as well, we interpolate
the components weighted by the volume density, as in the following lemma.

\begin{lemma}
\label[lemma]{lem:solenoidal-gluing}
Let $W=[s_-,s_+]\times\T^2_{\alpha,\gamma}$ carry a smooth
positive volume form
\[
    \Omega=\rho(s,\alpha,\gamma)\,
             ds\wedge d\alpha\wedge d\gamma.
\]
Let $Y_-$ and $Y_+$ be smooth divergence-free fields on boundary
collars, tangent to all $s$ slices there, with
$d\alpha(Y_\pm)>0$.  For every $c_-,c_+>0$, there is a smooth
divergence-free nowhere-zero field $Y_{c_-,c_+}$ agreeing with $c_-Y_-$
and $c_+Y_+$ on smaller boundary collars.  The construction is linear
in $(c_-,c_+)$ and, for fixed geometry,
\begin{equation}
    \norm{Y_{c_-,c_+}}_{C^m(W)}\leq C_m(c_-+c_+)
    \qquad(m\geq0),
    \label{eq:gluing-estimate}
\end{equation}
where $C_m$ is independent of $c_-,c_+$.
\end{lemma}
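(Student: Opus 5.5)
The plan is to reduce everything to a scalar problem on each torus slice $\{s=\mathrm{const}\}$. For a field $Y=A\,\partial_\alpha+C\,\partial_\gamma$ tangent to the slices, write $f:=\rho A$ and $k:=\rho C$. Then
\[
\Div_\Omega Y=\rho^{-1}\bigl(\partial_\alpha f+\partial_\gamma k\bigr),
\]
so $Y$ is divergence-free exactly when, for each fixed $s$, the $1$-form $f\,d\gamma-k\,d\alpha$ on $\T^2_{\alpha,\gamma}$ is closed. I will build the glued field $Y_{c_-,c_+}$ tangent to all slices of $W$, with $d\alpha(Y_{c_-,c_+})>0$ everywhere. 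That last property also gives nowhere-vanishing.

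\textbf{Step 1 (Hodge-type decomposition on the collars).} On the collar where $Y_\pm$ is defined, let $f_\pm,k_\pm$ be the associated densities. Closedness of $f_\pm\,d\gamma-k_\pm\,d\alpha$ on each slice gives
\[
f_\pm=a_\pm(s)+\partial_\gamma H_\pm,\qquad k_\pm=b_\pm(s)-\partial_\alpha H_\pm .
\]
Here $a_\pm,b_\pm$ are the slice averages (periods) of $f_\pm,k_\pm$. The function $H_\pm(s,\cdot)$ is the unique zero-mean primitive on $\T^2$, obtained for instance by Fourier series or by integrating along the two circles. Since $f_\pm,k_\pm$ are smooth in all variables, $a_\pm,b_\pm,H_\pm$ are smooth in $s$ as well, with $C^m$ bounds controlled by those of $Y_\pm$. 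Because $f_\pm>0$, we get $a_\pm(s)>0$.

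\textbf{Step 2 (interpolation).} Fix $s_-<\sigma_1<\sigma_2<\sigma_3<\sigma_4<s_+$ such that $[s_-,\sigma_2]$ lies in the lower collar and $[\sigma_3,s_+]$ in the upper one. Take smooth cutoffs of $s$ only:
\begin{itemize}[label=\textendash]
\item $\chi_-$ equal to $1$ on $[s_-,\sigma_1]$ and $0$ beyond $\sigma_2$, and symmetrically $\chi_+$ equal to $1$ on $[\sigma_4,s_+]$ and $0$ below $\sigma_3$;
\item $\theta$ equal to $0$ for $s\le\sigma_2$ and $1$ for $s\ge\sigma_3$.
\end{itemize}
Define
\[
A(s):=(1-\theta)c_-a_-+\theta c_+a_+,\qquad \mathcal B(s):=(1-\theta)c_-b_-+\theta c_+b_+ .
\]
These make sense because each $a_\pm,b_\pm$ is multiplied by a factor supported in its own collar. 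Next set
\[
f:=A+\partial_\gamma\bigl(c_-\chi_-H_-+c_+\chi_+H_+\bigr),\qquad k:=\mathcal B-\partial_\alpha\bigl(c_-\chi_-H_-+c_+\chi_+H_+\bigr),
\]
and $Y_{c_-,c_+}:=\rho^{-1}(f\,\partial_\alpha+k\,\partial_\gamma)$.

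Then $\partial_\alpha f+\partial_\gamma k=0$ identically, because the cutoffs depend only on $s$ and mixed partials commute; hence the field is divergence-free. On $s\le\sigma_1$ it equals $c_-Y_-$, and on $s\ge\sigma_4$ it equals $c_+Y_+$. It is visibly linear in $(c_-,c_+)$. The bound \eqref{eq:gluing-estimate} follows directly from the formula, since the cutoffs, $\rho^{-1}$, and $a_\pm,b_\pm,H_\pm$ are fixed by the geometry.

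\textbf{Step 3 (positivity, the main point).} The remaining task is $f>0$, which gives nowhere-vanishing.
\begin{itemize}[label=\textendash]
\item On $[\sigma_2,\sigma_3]$ the cutoffs $\chi_\pm$ vanish, so $f=A>0$ there, as a convex combination of the positive numbers $c_\pm a_\pm$.
\item On $[s_-,\sigma_2]$ we have $\theta=0$, so
\[
f=c_-\bigl((1-\chi_-)a_-+\chi_-(a_-+\partial_\gamma H_-)\bigr)=c_-\bigl((1-\chi_-)a_-+\chi_-f_-\bigr).
\]
This is a convex combination of $a_->0$ and $f_->0$, hence positive.
\item The upper collar is handled symmetrically.
\end{itemize}
The one delicate point is the ordering of the cutoffs: the $\theta$-transition must occur where both $\chi_\pm$ vanish, so that the mean part and the oscillating part are never cut off simultaneously in a way that could change the sign of $f$. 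With that arrangement, positivity holds on all of $W$.
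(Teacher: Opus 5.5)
Your strategy works, but one claim in Step 2 is false as written and needs a small repair. The factor $1-\theta$ is nonzero on all of $[s_-,\sigma_3)$, and $\theta$ is nonzero on all of $(\sigma_2,s_+]$. So $A$ and $\mathcal B$ need $a_\pm,b_\pm$ on the whole transition interval $[\sigma_2,\sigma_3]$. When the two collars are disjoint, as they typically are (e.g.\ in the application on $P$), that interval contains slices lying in neither collar. No partition of unity $\{1-\theta,\theta\}$ can then be subordinate to the two collars, so ``each factor is supported in its own collar'' cannot hold.

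The fix is cheap. Replace $a_\pm,b_\pm$ by smooth functions $\tilde a_\pm,\tilde b_\pm$ of $s$ on $[s_-,s_+]$ that agree with them on $[s_-,\sigma_2]$, resp.\ $[\sigma_3,s_+]$, with $\tilde a_\pm>0$. For instance, take $\tilde a_-:=a_-\circ\tau_-$, where $\tau_-$ is smooth, equals $s$ up to $\sigma_2$, and is constant beyond a point still inside the lower collar (choose $\sigma_2$ strictly inside it). Because these functions depend on $s$ alone, $\partial_\alpha f+\partial_\gamma k=0$ is untouched and agreement with $c_\pm Y_\pm$ near the ends is unchanged. Step 3 then goes through verbatim, with $A$ a convex combination of the positive functions $c_\pm\tilde a_\pm$.

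With this fix, your proof takes a genuinely different route from the paper's. The paper does no decomposition at all. It applies essentially the same freezing reparametrization $\sigma_\pm(s)$ to the full weighted pairs $(\rho Y_\pm^\alpha,\rho Y_\pm^\gamma)$. Since the divergence condition contains no $s$-derivative, this directly extends each boundary field to all of $W$ as a slice-tangent, divergence-free field with positive $\alpha$-component. It then glues $c_-\vartheta\widetilde Y_-+c_+(1-\vartheta)\widetilde Y_+$ with a single cutoff in $s$, so positivity is immediate and no ordering of several cutoffs is needed.

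You instead split the closed form $f\,d\gamma-k\,d\alpha$ on each slice into two parts. Its harmonic part, the slice averages $a_\pm,b_\pm$, can be interpolated freely. Its exact part $dH_\pm$ can be cut off freely through the stream function. The price is justifying the smooth $s$-dependence of $H_\pm$ and arranging the cutoffs carefully. Since you end up needing the freezing idea for the averages anyway, the paper's version is more economical. What yours buys is structural information: only the slice periods have to be interpolated, and in the middle of $W$ the field can be taken to be the simple field $\rho^{-1}(A(s)\partial_\alpha+\mathcal B(s)\partial_\gamma)$.
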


\begin{proof}
Since the vector fields are tangent to the $s$ slices, write them on their
respective collars as
\[
    Y_\pm=Y_\pm^\alpha\partial_\alpha
                 +Y_\pm^\gamma\partial_\gamma,
    \qquad
    b_\pm^j:=\rho Y_\pm^j\quad(j=\alpha,\gamma).
\]
For any tangential field $Y$, the divergence with respect to the volume form $\Omega$ is
\[
    \Div_\Omega Y=\frac1\rho\left[
        \partial_\alpha(\rho Y^\alpha)
        +\partial_\gamma(\rho Y^\gamma)\right].
\]
Consequently, the weighted boundary components satisfy
\begin{equation}
    \partial_\alpha b_\pm^\alpha
        +\partial_\gamma b_\pm^\gamma=0,
    \label{eq:weighted-divergence}
\end{equation}
and $b_\pm^\alpha>0$.  The absence of an $s$ derivative is what
allows us to extend these pairs by freezing the $s$ variable inside
each collar.

Choose $\delta>0$ so that $4\delta<s_+-s_-$ and the given fields
are defined on $[s_-,s_-+2\delta]\times\T^2$ and
$[s_+-2\delta,s_+]\times\T^2$, respectively.  Take smooth functions
$0\leq\zeta_\pm(s)\leq1$ on $[s_-,s_+]$ with
\[
\begin{aligned}
    \zeta_-&=1\quad\text{for }s\leq s_-+\delta,
    &\zeta_-&=0\quad\text{for }s\geq s_-+2\delta,\\
    \zeta_+&=1\quad\text{for }s\geq s_+-\delta,
    &\zeta_+&=0\quad\text{for }s\leq s_+-2\delta.
\end{aligned}
\]
Define
\[
\begin{aligned}
    \sigma_-(s)&:=\zeta_-(s)s
                    +(1-\zeta_-(s))(s_-+\delta),\\
    \sigma_+(s)&:=\zeta_+(s)s
                    +(1-\zeta_+(s))(s_+-\delta).
\end{aligned}
\]
The map $\sigma_-$ takes values in $[s_-,s_-+2\delta]$, equals
$s$ for $s\leq s_-+\delta$, and is constant for
$s\geq s_-+2\delta$.  The analogous properties hold for $\sigma_+$
at the other endpoint.  We can therefore define smooth weighted
components on all of $W$ by
\[
    w_\pm^j(s,\alpha,\gamma)
        :=b_\pm^j(\sigma_\pm(s),\alpha,\gamma),
    \qquad j=\alpha,\gamma.
\]
Because $\sigma_\pm$ depends only on $s$, equation
\eqref{eq:weighted-divergence} gives
\[
    \partial_\alpha w_\pm^\alpha
       +\partial_\gamma w_\pm^\gamma
    =\bigl(\partial_\alpha b_\pm^\alpha
       +\partial_\gamma b_\pm^\gamma\bigr)
           (\sigma_\pm(s),\alpha,\gamma)=0.
\]
Also $w_\pm^\alpha>0$ everywhere.  Dividing by the volume density
at the current point, we obtain the extensions
\[
    \widetilde Y_\pm
       :=\frac1{\rho(s,\alpha,\gamma)}
          \left(w_\pm^\alpha\partial_\alpha
                         +w_\pm^\gamma\partial_\gamma\right).
\]
They are divergence-free and agree with $Y_\pm$ on the smaller
collars where $\sigma_\pm(s)=s$.  Notice that it is the weighted
components $\rho Y_\pm^j$ that we freeze; the final denominator is
$\rho(s,\alpha,\gamma)$, rather than the density at
$\sigma_\pm(s)$.

To glue these extensions, choose a smooth $0\leq\vartheta(s)\leq1$
equal to one for $s\leq s_-+\delta$ and zero for
$s\geq s_+-\delta$.  Set
\begin{equation}
    Y_{c_-,c_+}:=c_-\vartheta(s)\widetilde Y_-
                   +c_+(1-\vartheta(s))\widetilde Y_+.
    \label{eq:gluing-form}
\end{equation}
The cutoff depends only on $s$ and both extensions have zero $s$
component, so no derivative of $\vartheta$ contributes to the
divergence.  Thus $\Div_\Omega Y_{c_-,c_+}=0$.  Moreover,
\[
    d\alpha(Y_{c_-,c_+})
       =\frac{c_-\vartheta(s)w_-^\alpha
                 +c_+(1-\vartheta(s))w_+^\alpha}{\rho}>0,
\]
since $c_-,c_+>0$.  The field is therefore nowhere zero.  On the
inner collar it equals $c_-Y_-$, and on the outer collar it equals
$c_+Y_+$, as required.

Finally, the extensions and the cutoff are fixed independently of
$c_-,c_+$.  Formula~\eqref{eq:gluing-form} is linear in these
amplitudes, and the triangle inequality gives
\[
\begin{split}
    \norm{Y_{c_-,c_+}}_{C^m(W)}
    &\leq c_-\norm{\vartheta\widetilde Y_-}_{C^m(W)}
       +c_+\norm{(1-\vartheta)\widetilde Y_+}_{C^m(W)}\\
    &\leq C_m(c_-+c_+).
\end{split}
\]
Here $C_m$ depends on the fixed volume density, boundary fields,
extensions and cutoff, but not on the amplitudes.  This proves
\eqref{eq:gluing-estimate}.
\end{proof}

We can now apply this lemma on $P$ with inner field $B_0$ of amplitude $d$ (i.e., rescaled by a constant factor $d$) and
outer field $(g^{-1})_*B_0$ of amplitude $c$, and push forward by the orientation-preserving similarity
$g$.  We thus obtain a vector field $E_{c,d}$ on
$W_0:=T_1\setminus\Int S_1$ which agrees with $cB_0$ near
$\partial T_1$ and with $d\,g_*B_0$ near $\partial S_1$.
It is nowhere zero for $c,d>0$, and
\begin{equation}
    \norm{E_{c,d}}_{C^m(W_0)}\leq C_m(c+d).
    \label{eq:transition-estimate}
\end{equation}
It is also divergence-free, since push-forward by a similarity
preserves this property: its Jacobian determinant is constant.
The transition lies inside $T_1$, so the outer barrier collar (near $\partial S_0$) is unchanged.

\subsubsection*{Step 3: Iteration and smoothness}

The preceding step allows independent positive amplitudes on successive
blocks.  As before, fix $0<q<1$ small enough for the construction; the geometry
and its constants $C_m$ are now fixed.  The idea is to repeat the gluing at every
scale and choose the amplitudes to ensure smoothness at the limiting point.

To this end, for $k\geq0$, set
\begin{equation}
    S_k:=g^k(S_0),\qquad T_{k+1}:=g^k(T_1).
    \label{eq:nested-tori}
\end{equation}
It is then clear that $S_k\Supset T_{k+1}\Supset S_{k+1}$.  Since $g$ is a
contraction, $\bigcap_{k\geq0}S_k=\{O\}$; the strict nesting
also gives $O\in\Int S_k$ for every $k$. Recall that $O$ is the (unique) fixed point of the map $g$, see Step~2.

To compensate for the growth of derivatives under rescaling, choose
amplitudes that decay faster than every power of the spatial scale $q^k$, e.g., take
\begin{equation}
    \lambda_k:=q^{k^2}>0.
    \label{eq:smooth-amplitudes}
\end{equation}
On the main blocks define the vector field
\begin{equation}
    B:=\lambda_k(g^k)_*B_0
    \quad\text{on }D_k:=S_k\setminus\Int T_{k+1},
    \label{eq:main-block-field}
\end{equation}
and fill the transition regions by the vector field
\begin{equation}
    B:=(g^k)_*E_{\lambda_k,\lambda_{k+1}}
    \quad\text{on }W_k:=T_{k+1}\setminus\Int S_{k+1}.
    \label{eq:transition-field}
\end{equation}
It is straightforward to check that the definitions agree on neighborhoods of every interface.  Thus they
give a smooth divergence-free vector field $B$ on a neighborhood of
$S_0\setminus\{O\}$, nowhere zero there.

Under push-forward by $g^k$, vectors acquire a factor $q^k$ and
each spatial derivative a factor $q^{-k}$.  Since
$\lambda_{k+1}\leq\lambda_k$, \eqref{eq:transition-estimate} gives
\begin{equation}
    \norm{D^mB}_{L^\infty(S_k\setminus\Int S_{k+1})}
    \leq C_m q^{k^2+k(1-m)}.
    \label{eq:main-derivative-estimate}
\end{equation}
The fixed point lies in $\Int S_1$, so there are constants $c,C>0$
such that
\begin{equation}
    cq^k\leq\abs{x-O}\leq Cq^k
    \quad(x\in S_k\setminus\Int S_{k+1}).
    \label{eq:annular-distance}
\end{equation}
For fixed $m,N$, the bound $q^{k^2+k(1-m)}\leq q^{kN}$ holds
for all sufficiently large $k$.  The preceding estimates therefore give,
for all $m,N\geq0$,
\begin{equation}
    \norm{D^mB(x)}\leq C_{m,N}\abs{x-O}^N
    \qquad(x\to O).
    \label{eq:flat-estimate}
\end{equation}
Setting $B(O)=0$ therefore gives a smooth flat extension $B$.  Its
divergence vanishes at $O$ as well, since all first derivatives do.
Since $B$ is nowhere zero in $S_0\setminus\{O\}$, this magnetic
zero is isolated.

\subsubsection*{Step 4: Stability and compact support}

It remains to use the magnetic barriers retained at every small scale.  Set
$W_k^\pm:=g^k(W^\pm)$ and $\mathcal C_k:=g^k(\mathcal C_0)$ (recall the definitions of $W^\pm$ and $\mathcal C_0$ at the end of Step~1).
The gluing leaves these collars unchanged (that is, the interpolation in the gluing process does not change the push-forward fields in these sets), so on $\mathcal C_k$ we
have $B=\lambda_k(g^k)_*B_0$.  The crucial property is that the threshold $\nu_*$ for the model field in the first block (see the end of Step~1) transfers
by rescaling space and time.  The reason is that $g^k$ is an orientation-preserving Euclidean similarity, which transforms Killing fields into Killing fields, and preserves the Newton--Lorentz equation of motion, up to suitable rescalings. More precisely, since the similarity ratio of $g^k$ is $q^k$, a trajectory $x(t)$ in the
collar $\mathcal C_k$ is transformed by
\[
    y(s):=g^{-k}\left(x\left(\frac{s}{\lambda_kq^k}\right)\right)
\]
into a trajectory of the model field in $\mathcal C_0$.  Indeed,
substitution in Newton--Lorentz's equation~\eqref{eq:magnetic-equation} gives
\[
    y''=y'\times B_0(y),
    \qquad
    \abs{y'}=\frac{\abs{\dot x}}{\lambda_kq^{2k}}.
\]
We can then conclude from the magnetic barrier property established in Step~1 that a particle starting in $W_k^-$ cannot leave $W_k^+$ if its
speed is less than
\begin{equation}
    \nu_k:=\lambda_kq^{2k}\nu_*>0.
    \label{eq:speed-threshold}
\end{equation}
Only the trajectory segment crossing the collar is rescaled;
no symmetry of the transition fields is needed.
For stability, it suffices to choose the enclosing barrier first
and then take the initial speed below its threshold, even though
$\nu_k\to0$ as $k\to\infty$.
Indeed, given $\eps>0$, choose $k$ so large that
$W_k^+\subset\mathbb B_{\eps/2}(O)$.  Since
$O\in S_{k+1}\subset T_{k+1}\subset W_k^-$, we can choose
\begin{equation}
    0<\delta<\frac12\min\{
        \dist(O,\R^3\setminus W_k^-),\nu_k,\eps\}.
    \label{eq:stability-delta}
\end{equation}
If $\abs{x(0)-O}+\abs{\dot x(0)}<\delta$, the barrier and speed
conservation imply
\[
    x(t)\in W_k^+,
    \qquad
    \abs{x(t)-O}+\abs{\dot x(t)}<\eps
    \quad(t\geq0).
\]
The trajectory stays in a compact region where $B$ is smooth, with
bounded velocity, so the ODE continuation theorem gives existence for
all these times.  This proves Lyapunov stability at $(O,0)$.

To make the vector field $B$ compactly supported, we simply invoke a standard extension theorem.
Choose a ball $\overline{U_1}\subset \Int S_0$ centered at $O$. Then there exists~\cite{EPP} a divergence-free vector field $\widetilde B\in C_c^\infty(\R^3;\R^3)$ that
agrees with $B$ on $U_1$.  By construction, it retains the isolated flat zero and all sufficiently small barrier collars.  The stability proof therefore
applies unchanged, using only those collars.

Finally, writing $U_1=\mathbb B_\rho(O)$, set
$\widehat B(x):=\widetilde B(O+\rho x)$.  This field is compactly
supported, divergence-free and flat at the origin, and is nonzero for
$0<\abs x<1$.  The change of variables $y(t)=(x(t)-O)/\rho$ sends
trajectories for $\widetilde B$ to trajectories for $\widehat B$ and
preserves Lyapunov stability.  Thus $\widehat B$ satisfies all the
requirements of \Cref{thm:smooth-counterexample}. The proof is complete.

\section*{Acknowledgements}

The authors are grateful to Rafael de la Llave for his interest in this problem and for sending us the sketch of proof in Appendix~\ref{sec:nonzero-field}. This work has received funding from the European Research Council (ERC)
under the European Union's Horizon 2020 research and innovation programme
through grant agreement 862342 (A.E.).  The authors are also partially
supported by the grant PID2022-136795NB-I00 of the Spanish Science Agency and
the ICMAT--Severo Ochoa grant CEX2023-001347-S.

\section*{Declaration of use of AI tools}

The fundamental ideas of this work are due to the authors. During the
preparation of the article, the authors used ChatGPT to assist
with the exposition and organization of the manuscript and with checking
technical details. The authors take full responsibility for the
mathematical arguments and the final content of the article.

\appendix

\section{Proof of Theorem~\ref{thm:away-from-magnetic-zeros}}
\label{sec:nonzero-field}

Set $b:=\abs{B(0)}>0$ and $n_0:=B(0)/b$.  Choose $R>0$ with
$\overline{\mathbb B_R(0)}\subset U$, and put
\[
    M:=\sup_{\mathbb B_R(0)}\norm{DB}.
\]
Fix $0<r<R$ so that $4Mr\leq b$.  For $\eps>0$, consider the
solution with $x(0)=0$ and $\dot x(0)=\eps n_0$.  Speed conservation
gives $\abs{\dot x(t)}=\eps$ and $\abs{x(t)}\leq\eps t$.
In particular, the solution stays in $\overline{\mathbb B_r(0)}$
for $0\leq t\leq r/\eps$ and exists throughout this interval by
the ODE continuation theorem.

The Lorentz force is orthogonal to $B$, so
\[
    \frac{d}{dt}\bigl(\dot x\cdot B(x)\bigr)
    =\dot x\cdot DB(x)\dot x\geq-M\eps^2.
\]
Since $\dot x(0)\cdot B(0)=\eps b$ and
$\abs{B(x(t))-B(0)}\leq M\eps t$, it follows that
\[
\begin{split}
    \dot x(t)\cdot n_0
    &=\frac1b\bigl(\dot x(t)\cdot B(x(t))
        +\dot x(t)\cdot[B(0)-B(x(t))]\bigr)\\
    &\geq\eps-\frac{2M\eps^2t}{b}
     \geq\frac\eps2
     \qquad(0\leq t\leq r/\eps).
\end{split}
\]
Integrating yields
\[
    \abs{x(r/\eps)}\geq x(r/\eps)\cdot n_0\geq\frac r2.
\]
The escape distance $r/2$ is independent of $\eps$, which can be
arbitrarily small.  This proves Lyapunov instability.

\makeatletter
\renewcommand{\@biblabel}[1]{#1.}
\makeatother

\end{document}